\newcounter{theorems}
\theoremstyle{plain}
\newcounter{lemma}
\numberwithin{equation}{section}
\newtheoremstyle{par}%
     {\topsep}%
     {\topsep}%
     {\itshape}%
     {}%
     {\bfseries}%
     {}%
     {.5em}%
     {}%
\newtheoremstyle{parrm}%
     {\topsep}%
     {\topsep}%
     {\normalfont}%
     {}%
     {\itshape}%
     {}%
     {.5em}%
     {}%
\theoremstyle{plain}
\numberwithin{equation}{section}
\newtheorem{lemma}[equation]{Lemma}
\newtheorem{theo}[equation]{Theorem}
\theoremstyle{definition}
\theoremstyle{remark}
\newtheorem{remark}[equation]{Remark}
\theoremstyle{par}
\theoremstyle{parrm}
\def\tagform@#1{\maketag@@@{\ignorespaces#1\unskip\@@italiccorr}}
\newcommand{\RR}{\mathbb{R}}
\newcommand{\ZZ}{\mathbb{Z}}
\newcommand{\from}{\colon}
\newcommand{\simbolovettore}[1]{{\boldsymbol{#1}}}
\newcommand{\va}{\simbolovettore{a}}
\newcommand{\vn}{\simbolovettore{n}}
\newcommand{\vq}{\simbolovettore{q}}
\newcommand{\vv}{\simbolovettore{v}}
\newcommand{\vw}{\simbolovettore{w}}
\newcommand{\vx}{\simbolovettore{x}}
\newcommand{\zero}{\boldsymbol{0}}
\newcommand*\de{\mathop{}\!\mathrm{d}}
\newcommand{\mgrad}{\nabla_M}
\newcommand{\conf}[2]{\mathbb{F}_{#1}(#2)}
\newcommand{\eucnorm}[1]{\left\lvert{#1}\right\rvert}
\newcommand{\mnorm}[1]{{\left\lVert#1\right\rVert}_M}
\newcommand{\mscalar}[2]{{\left\langle{#1},{#2}\right\rangle}_{M}}
\newcommand{\action}{\mathcal{A}}
\newcommand{\dt}{\protect\de{t}}
\newcommand{\Sym}{\mathfrak{S}}
\newcommand{\Iso}{\operatorname{Iso}}
\newcommand{\TT}{\mathbb{T}}
\newcommand{\nop}{\nu}
\newcommand{\XX}{\mathcal{X}}
\newcommand{\dist}{\operatorname{dist}}
\newcommand{\codim}{\operatorname{codim}}
\begin{document}
\pagenumbering{arabic}

\title{%
Symmetries and periodic orbits for the $n$-body problem: about the computational approach
}

\author{D.L.~Ferrario}

\date{%
\today}
\maketitle

\begin{abstract}
The main problem is to understand and to find periodic symmetric orbits in the $n$-body
problem, in the sense of finding methods to prove or compute their
existence, and more importantly to describe their qualitative and
quantitative properties.  In order to do so, and in order to classify such
orbits and their symmetries, computers have been extensively used in many
ways since decades.  We will focus on some very special symmetric orbits,
which occur as symmetric critical points (local minimizers) of the
gravitational Lagrangean action functional.  The exploration of the loop
space of the $n$-point configuration space, raised some computational and
mathematical questions that couldd be interesting.  The aim of the article
is to explain how such questions and issues were %
considered in the development of a software package that combined symbolic
algebra, numerical and scientific libraries, human interaction and
visualization. %

\noindent {\em MSC Subject Class\/}: 
\vspace{0.5truecm}
70F10 %

\noindent {\em Keywords\/}: Symmetries, periodic orbits, 
$n$-body problem, computational approach.
\end{abstract}

\section{Introduction}

\textbf{Configuration spaces:}
Let $n\geq 2$ and $d\geq 1$ be integers. 
Let $\vn$ denote the set $\vn =\{1,2,\ldots, n\}$. 
Let $E=\RR^d$ be the $d$-dimensional euclidean space. 
Elements of $E^n$ are denoted by
$\vq=(\vq_1,\vq_2,\ldots, \vq_n)$,
where $\vq_j \in E$ for $j\in \vn$. 
The \emph{collision set} is
\[
\Delta = \bigcup_{i<j}\{ \vq \in E^n : \vq_i = \vq_j \},
\]
and 
the \emph{configuration space}  
of $n$ points in $E$ is 
\[
\conf{n}{E} = E^n \smallsetminus \Delta = \{ \vq \in E^n : \vq_i \neq \vq_j \}. 
\]

For $j\in \vn$,  let $m_j>0$ be called positive \emph{masses}. 
Assume that
\[
\sum_{j=1}^n m_j = 1.
\]
Let $\mscalar{*}{*}$ denote the \emph{mass-metric} on
(the tangent vectors of)
$E^n$, defined as
\[
\mscalar{\vv}{\vw} = \sum_{j=1}^n m_j \vv_j \cdot \vw_j,
\]
where $\vv_j \cdot \vw_j$ is the euclidean scalar product in (the tangent space of)
$E$. Let $\eucnorm{\vv_j}$ denote the euclidean norm  of a
vector $\vv_j$ in $E$.
The norm corresponding to the mass-metric is
$\mnorm{\vv} = \sqrt{\mscalar{\vv}{\vv}}$.
Let $\alpha>0$ be a fixed homogeneity parameter, and
$U \from \conf{n}{E} \to \RR$ the potential function defined as
\begin{equation}\label{eq:potential}
U(\vq) = \sum_{1\leq i<j \leq n} \dfrac{m_im_j}{\eucnorm{\vq_i - \vq_j}^\alpha}~.
\end{equation}
The gradient of $U$ with respect to the mass-metric is denoted by
$\mgrad U$, and in standard coordinates $\vq_j$ has $d$-dimensional $j$-th component
\[
\left( \mgrad U \right)_j = \dfrac{1}{m_j} \dfrac{\partial U}{\partial \vq_j}~.
\] 

\textbf{Newton and Lagrange equations:} 
Given the potential $U$, 
systems of interacting bodies can be modeled by their \emph{Newton equations}
\begin{equation}\label{eq:newton}
m_j \dfrac{d^2 \vq_j}{dt^2} = \dfrac{\partial U}{\partial q_j} (\vq), ~ j\in\vn, 
\iff
\dfrac{d^2\vq}{dt^2} = \mgrad U(\vq).
\end{equation}
The potential $U$ is smooth on the configuration space $\conf{n}{E}$,
and goes to infinity on the set $\Delta$ of collisions. 
This fact implies that solutions are generally smooth, but \emph{singularities}
can occur: $n$-body orbits can collide, or become unbounded in a finite time \cite{MR0495348,MR1166640}.
Furthermore, the general topology of the configuration space 
and $\Delta$ is held
responsible for the possibility of \emph{chaotic trajectories} \cite{Devaney81}.
For any open interval $\Omega \subset \RR$,
let $H^1(\Omega,E^n)$ be the Hilbert Sobolev space $W^{1,2}(\Omega,E^n)$ 
of all the functions  $\vq \in L^2(\Omega,E^n)$ with weak derivative in $L^2(\Omega,E^n)$. 
If $\Omega$ is a bounded interval, 
let $H^1_0(\Omega,E^n) \subset H^1(\Omega,E^n)$ be the 
space of all functions $\vq \in H^1(\Omega,E^n)$ vanishing
on the boundary of $\Omega$. 
The \emph{Lagrangian Action Functional} is defined on $H^1(\Omega,E^n)$ as
\[
\action(\vq) = \int_{\Omega} \frac{1}{2} \mnorm{\dot \vq}^2  + U(\vq) \dt.
\]
If $\vq \in H^1(\Omega, \conf{n}{E})$ is a collisionless solution of \eqref{eq:newton}, 
then it is a critical point for the action functional.  
On the other hand, the action functional $\action$ is finitely defined also on colliding 
trajectories (unless assuming the strong-force condition $\alpha\geq 2$),
and hence local minimizers \emph{a priori} need not be regular points. 

\textbf{Periodic and symmetric trajectories:}

Consider the symmetric group $\Sym_n$ of all permutations in the set 
of indices $\vn=\{1, 2, \ldots, n\}$, and 
the orthogonal group $O(d)$ of $E$. 
The symmetric group 
$\Sym_n$ 
acts on the left on $E^n$ by setting
\[
\sigma \cdot (\vq_1,\vq_2, \ldots, \vq_n) = (\vq_{\sigma^{-1}(1)}, \vq_{\sigma^{-1}(2)}, 
\ldots \vq_{\sigma^{-1}(n)}) 
\]
for each permutation $\sigma\in \Sym$. 
The orthogonal group 
$O(d)$ acts naturally on $E$, and hence it acts diagonally on $E^n$ by setting
\[
g  \cdot (\vq_1,\vq_2, \ldots, \vq_n) = (g \vq_1 , g \vq_2, 
\ldots g \vq_n) 
\]
for each element $g \in O(d)$. 
The two actions commute, and the direct product 
$\Sym_n  \times O(d)$ acts on $E^n$ accordingly.

Now, consider the space $C^0(\RR,E^n)$  of all continuous trajectories in $E^n$. 
The group $\Iso(\RR)$ of all isometries of the real line acts on $C^0(\RR,E^n)$ as
$(\tau \cdot \vq)(t) = \vq(\tau^{-1}(t))$ for each $\tau\in \Iso(\RR)$. 
Hence, $\Iso(\RR)\times \Sym_n \times O(d)$ acts on 
$C^0(\RR,E^n)$
as 
\[
\begin{aligned}
\left( (\tau, \sigma, g ) \cdot \vq \right) (t) 
& = 
((\sigma,g) \cdot \vq )(\tau^{-1}(t)) 
\\
& =
\left(g \vq_{\sigma^{-1}}(\tau^{-1}(t)),
g \vq_{\sigma^{-2}}(\tau^{-1}(t)),
\ldots,
g \vq_{\sigma^{-n}}(\tau^{-1}(t)
\right)
\\
\end{aligned}
\]
for each $\vq\from \RR \to E^n$.

A \emph{periodic trajectory} of period $T>0$  
is simply a trajectory $\vq \from \RR \to E^n$ such that 
$\tau_T \cdot \vq = \vq$, 
where $\tau_T \in \Iso(\RR)$ is the translation in time
$\tau_T(t) = t + T$. 
If with a slight abuse of notation 
the same symbol $T$ denotes the infinite cyclic subgroup in $\Iso(\RR)\times \Sym_n \times O(d)$ 
generated by $\tau_T$, then 
$\vq$ is periodic if and only if $\vq \in C^0(\RR,E^n)^T$,
where as usual 
when $G$ acts on a set $X$ the subspace  $X^G \subset X$ is defined as 
as $X^G = \{ x \in X : \forall g\in G, gx = x  \}$. 
If $\TT_T$ denotes the circle
$\RR/T\ZZ$, the space of all $T$-periodic trajectories
is endowed with the Hilbert space structure given by 
\[
X= H^1(\TT_T,E^n) = H^1([0,T],E^n) \cap C^0(\RR, E^n)^T  
\]
Furthermore, the full group $\Iso(\RR)\times \Sym_n \times O(d)$
acts on  $X$ by restriction. One can replace $\Iso(\RR)$ with $\Iso(\TT_T)$ 
for simplicity.

The action functional can be defined on  
$X$ as 
\[
\action(\vq) = \int_{0}^T \frac{1}{2} \mnorm{\dot \vq}^2  + U(\vq) \dt
\]
for all $\vq \in X$. 
Given a choice of masses $m_1,\ldots, m_n$, let 
$\Sym_n^m$  denote the subgroup of $\Sym_n^m$ 
consisting of all permutations  $\sigma$ 
in $\Sym_n$ such that $\forall j \in \vn, m_j = m_{\sigma(j)}$.  
If all masses are equal, then $\Sym_n^m = \Sym_n$.
If they are all different, then $\Sym_n^m = 1$. 
Now, observe that the functional $\action$ is invariant with respect 
to all elements in the product $\Iso(\RR)\times \Sym_n^m \times O(d)$.

The main procedure now is to consider Palais' \emph{Principle of symmetric criticality}:
if $G$ is any subgroup of $\Iso(\TT_T)\times \Sym_n^m \times O(d)$,
then
$\action^G$ denotes the restriction of $\action$ to the fixed subspace $X^G$,
and it happens that 
any critical point of 
$X^G$ is a critical point of $X$,
provided it is a non-colliding orbit.   
The main issues now are two: which symmetry groups allow to identify and to `understand' 
some classes of critical points (such as local symmetric minimizers),
and how to approximate such periodic orbits, if any. 

Now, let $G$ be a subgroup of 
$\Iso(\TT_T)\times \Sym_n^m \times O(d)$. The projections on the three
factors yield three group homomorphisms
$\tau\from G \to  \Iso(\TT_T)$,
$\sigma\from G \to \Sym_n^m$,
$\rho\from G \to O(d)$. 
The image of $\tau$ is a finite subgroup of order $l$ in $\Iso(\TT_T) \cong O(2)$. 
The group can  be just a single reflection in time (symmetry of \emph{brake type}),
i.e. a dihedral subgroup of $O(2)$ of order $2$, or
it can be 
a dihedral subgroup of $O(2)$ of order $l$, with $l>2$ 
(symmetry of \emph{dihedral type}),
or a cyclic group of rotations in $\TT_T$ (symmetry of \emph{cyclic type})
of order $l$ with $l>1$. The fundamental domain of the 
$G$-action on $\TT_T$ is therefore defined as $T$
divided by the order of the image of $\tau$. 
In all the three cases the \emph{fundamental domain} is the 
interval $[0,T/{l}]$.
Since the potential is homogeneous of degree $-\alpha$,
without loss of generality we can assume that $T = l$.
For dihedral symmetries let $\tau(h_0)$ and $\tau(h_1)$ the time-reflections
fixing $0$ and $1$ respectively in $\TT_T$. If the symmetry is of brake type,
then $h_0=h_1$, otherwise $h_0\neq h_1$ and $\tau(h_0),\tau(h_1)$ 
are generators of $\Im(\tau)$. 
For a cyclic type, let $r$ be the cyclic generator such that
$\tau(r)(0) = 1$ in $\TT_T$.  

Let $K=\ker \tau \subset G$. 
Then $\Im(\tau) = G/K$. 
The image of $\tau$ in $\Iso(\TT_T)$ is either 
generated by $\tau(r)$, or by $\tau(h_0)$ and $\tau(h_1)$ 
for suitable $r,h_0,h_1$ defined as above. 
Since 
$\vq \in X^G \iff 
\forall t \in \TT_T, 
\vq(\tau(g)(t)) = (\sigma\times \rho)(g)\vq(t)$, 
\[
X^G \cong  \{
\vq \from \TT_T \to (E^n)^K : 
\forall gK \in G/K
\vq(\tau(g)t) = (\sigma(g),\rho(g))\cdot \vq(t)
\},
\]
one has for the cyclic case 
\[
X^G \cong 
\{ \vq \from [0,1] \to (E^n)^K :
\vq(1) = (\sigma(r),\rho(r))\cdot \vq(0) 
\} 
\]
and for the brake and dihedral case
\[
X^G \cong 
\{ \vq \from [0,1] \to (E^n)^K :
\vq(0) = (\sigma(h_0),\rho(h_0))\cdot \vq(0) \wedge 
\vq(1) = (\sigma(h_1),\rho(h_1))\cdot \vq(1)
\}.
\]
Now, $(E^n)^K$ is just an euclidean subspace of $E^n$, on which $G/K$ acts. 
Consider therefore the fact that 
\[
H^1([0,1], (E^n)^K) \cong (E^n)^K\oplus  
H^1_0([0,1],(E^n)^K) \oplus
(E^n)^K 
\]
by sending $\vq(t)$ to $(\vq(0),\vq(t) - ( \vq(0) + t (\vq(1)-\vq(0)),\vq(1))$. 

So the group of symmetries acts either as $\ker \tau$ (and hence it 
gives constraints to the points in 
the configuration space for all $t$),
or it acts just on a finite dimensional subspace of the 
Hilbert space $X^G$, by adding constraints  on the boundary of the fundamental domain $[0,1]$.

\section{Local minimizers of symmetric Lagrangean functionals}
\textbf{Approximations and projections}

As we have seen before, we need to define finitely-dimensional 
linear subspaces of the Hilbert space $H^1_0([0,1],(E^n)^K)$.
It is natural to consider Fourier sine polynomials of type
\[
\gamma(t) = \sum_{k=1}^s \va_k \sin(k\pi t)
\]
for $s\geq 1$ and $\va_k \in (E^n)^K$. 
In other words, we can consider embedding of the finite-dimensional space 
$(E^{n(s+2)})$ in $X$ defining
for each $\va$ in $E^n\oplus (E^n)^s \oplus E^n$ 
\begin{equation}\label{eq:embedding}
\Psi(\va_0,\va_1,\ldots, \va_s, \va_{s+1}) 
= 
\va_0 + t( \va_{s+1}-\va_0 ) + \sum_{k=1}^s \va_k \sin(k\pi t)
\end{equation}
Now, the action of $G$ is diagonal as $\sigma \times \rho$ on all
the copies $E^n$ with $0\leq k \leq s+1$. 
The symmetry constraints on the boundary will be 
$r \va_0 = \vq_1$ if of cyclic type,
otherwise 
$h_j \va_j =$ for $j=0,1$. 
Both can be written as a single involution $h$ defined on $E^{n}\oplus E^n$ as 
\[
\begin{aligned}
h(\va_0,\va_{s+1}) = ( r^{-1} \va_{s+1}, r \va_{0} ) &\quad  \text{if cyclic type} \\
h(\va_0,\va_{s+1}) = ( h_0 \va_{0}, h_1 \va_{s+1} ) & \quad \text{if brake or dihedral type} \\
\end{aligned}
\]
The configurations $\va_0$ and $\va_{s+1}$ are the configurations at time $0$ and 
$1$ respectively, and the configurations $\va_k$ with $1\leq k \leq s$ are 
Fourier coefficients of the term in $H^1_0$. All of them need to be 
made symmetric with respect to the group $K$. 

\begin{remark}
Such an approximation raises some questions and issues about 
visibility of critical points, i.e. whether  the sequence of finite-dimensional
subspaces of the Hilbert space $X$ approximate all critical points 
of $\action$. A positive answer can be found in 
\cite{MR2678672}, 
provided some natural non-degeneracy assumptions are made.
Other possible approximations compatible with symmetries are PL paths.
\end{remark}

\begin{lemma}
If $\vq(t) = \Psi(\va)$ for $\va \in E^{n(s+1)}$, then 
the kinetic part of the Lagrangian is 
\[
\int_0^T 
\dfrac{1}{2} \mnorm{\dot \vq}^2 \de{t} = 
\dfrac{|G/K|}{2} 
\int_0^1 
\mnorm{\dot \vq}^2 \de{t}
\]
where
\[
\int_0^1 \mnorm{ \dot \vq}^2 \de{t} = 
\mnorm{ \va_{s+1} - \va_0}^2  + 
\sum_{k=1}^s
\frac{k^2\pi^2}{2} \mnorm{\va_k}^2~.
\]
\end{lemma}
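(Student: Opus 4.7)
The strategy is to compute $\dot\vq$ directly from the formula \eqref{eq:embedding}, expand the mass-norm squared, and then use elementary Fourier orthogonality on $[0,1]$. The first equality, relating the integral over $[0,T]$ to the integral over the fundamental domain $[0,1]$, is obtained by a symmetry reduction using the $G$-equivariance of $\vq$.

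First, differentiating \eqref{eq:embedding} term by term gives
\[
\dot\vq(t) = (\va_{s+1}-\va_0) + \sum_{k=1}^s k\pi\,\va_k\cos(k\pi t).
\]
Taking the mass-norm squared produces three groups of contributions: a constant term $\mnorm{\va_{s+1}-\va_0}^2$; a cross term $2\sum_{k=1}^s k\pi\,\mscalar{\va_{s+1}-\va_0}{\va_k}\cos(k\pi t)$; and a double sum $\sum_{j,k=1}^s jk\pi^2\,\mscalar{\va_j}{\va_k}\cos(j\pi t)\cos(k\pi t)$. Integrating over $[0,1]$, the relations $\int_0^1 \cos(k\pi t)\,\de t = 0$ for every integer $k\ge 1$ and $\int_0^1\cos(j\pi t)\cos(k\pi t)\,\de t = \tfrac12\delta_{jk}$ for $j,k\ge 1$ kill the cross term and collapse the double sum, yielding the second displayed identity.

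For the first identity, recall from the setup that $T=l$ with $l=|\Im(\tau)|=|G/K|$, and that the fundamental domain is $[0,1]$. Since $\vq\in X^G$, for every $g\in G$ one has $\vq(\tau(g)t) = (\sigma(g),\rho(g))\cdot\vq(t)$; differentiating in $t$ and using that $\sigma(g)\times\rho(g)$ acts as an isometry of $(E^n,\mscalar{\cdot}{\cdot})$ (permutations in $\Sym_n^m$ permute the mass-matched components, and $\rho(g)\in O(d)$ is orthogonal on each factor) gives $\mnorm{\dot\vq(\tau(g)t)}^2=\mnorm{\dot\vq(t)}^2$. Hence the scalar function $t\mapsto\mnorm{\dot\vq(t)}^2$ descends to the orbit space of the $\Im(\tau)$-action on $\TT_T$, so its integral over $\TT_T$ is $|G/K|$ times its integral over the fundamental domain $[0,1]$, yielding the prefactor $|G/K|/2$.

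\textbf{Main obstacle.} There is no genuine analytical obstacle: the computation is a standard Fourier orthogonality argument on $[0,1]$. The only step requiring a moment's care is the invariance-of-the-integrand claim used to localize the integral to the fundamental domain; this is a direct consequence of the equivariance of $\vq$ and of the fact that $\sigma\times\rho$ preserves the mass-metric. A minor notational point is that the formula for $\Psi$ involves $s+2$ vector coefficients $\va_0,\ldots,\va_{s+1}$ rather than $s+1$, but this does not affect the argument.
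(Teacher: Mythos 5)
Your proposal is correct and follows essentially the same route as the paper: differentiate $\Psi(\va)$, expand the mass-norm squared, and apply the orthogonality relations $\int_0^1\cos(k\pi t)\,\de t=0$ and $\int_0^1\cos(j\pi t)\cos(k\pi t)\,\de t=\tfrac12\delta_{jk}$. Your additional justification of the prefactor $|G/K|/2$ via equivariance and the isometry of $\sigma\times\rho$ is a welcome bonus, since the paper leaves that reduction to the fundamental domain implicit.
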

\begin{proof}
\[
\begin{aligned}
\dfrac{d \vq}{dt} & = \va_{s+1} - \va_0 + \sum_{k=1}^s k\pi \va_k \cos( k\pi t )\\
\implies 
\mnorm{\dot \vq}^2 & = 
\mnorm{ \va_{s+1} - \va_0}^2 + 2 
\sum_{k=1}^s \mscalar{\va_{s+1} - \va_0}{\va_k } k \pi \cos (k\pi t) 
+ \\
& +\mnorm{ 
\sum_{k=1}^s  k \pi \va_k \cos(k\pi t)
}^2 
\\
\implies
\int_0^1 
\mnorm{\dot \vq}^2 \de{t}  & = 
\mnorm{ \va_{s+1} - \va_0}^2 
+
\sum_{j=1}^s
\sum_{k=1}^s 
\mscalar{ j \pi \va_j }{ k \pi \va_k} \int_0^1 \cos(j\pi t)\cos(k \pi t) \de{t} \\
& =
\mnorm{ \va_{s+1} - \va_0}^2  + 
\sum_{k=1}^s
\frac{k^2\pi^2}{2} \mnorm{\va_k}^2 
\end{aligned}
\]
since for integers $j,k$
\[
\begin{aligned}
k \geq 1 & \implies 
\int_0^1 \cos(k\pi t) \de{t} = 0 , \quad 
\int_0^1 \cos^2(k\pi t) \de{t}  = \frac{1}{2} \\
j\neq k & \implies 
\int_0^1 \cos(j\pi t) \cos(k\pi t) \de{t}   = 0.
\end{aligned}
\]
\end{proof}

Now, 
the kinetic part is a plain quadratic form on $\va$, with explicit coefficients. 
About the potential part, it involves some integrals which 
cannot be computed symbolically, and hence another level of approximation must occur. 
Applying a composite trapezoidal rule we can 
subdivide the interval $[0,1]$  in $\nop$ equal intervals 
of length $\frac{1}{\nop} $, and define an approximate potential as
\begin{equation}
\label{eq:trapezoidal}
\int_0^1 U(\vq(t)) \de{t} \approx
\mathcal U( \vq ) = 
\dfrac{1}{\nop}
\left(
\dfrac{U(\vq(0))}{2}
+
\sum_{j=1}^{\nop} 
U(\vq(\frac{j}{\nop}))
+ 
\dfrac{U(\vq(1))}{2}
\right)
\end{equation}
which implies that if $\vq = \Psi(\va)$ the following lemma holds,
once we write in short $\mathcal U(\va) = \mathcal U(\Psi(\va))$. 

\begin{lemma}
If $\va \in E^{n(s+1)}$ and $\nop\geq 1$, then
\[
\nop \mathcal U (\va) = 
\dfrac{U(\va_0)}{2}
+
\sum_{j=1}^{\nop} 
U\left ( (1-\frac{j}{\nop})\va_0 + \frac{j}{\nop}\va_{s+1} + 
\sum_{k=1}^s \va_k \sin(\frac{jk\pi}{\nop} )
\right)
+ 
\dfrac{U(\va_{s+1})}{2}
\]
\end{lemma}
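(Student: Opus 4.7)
The plan is a direct unfolding of definitions, so I will organize the proof around three evaluations of the embedding~$\Psi$.

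First, I would invoke the convention $\mathcal{U}(\va) = \mathcal{U}(\Psi(\va))$ together with the trapezoidal formula \eqref{eq:trapezoidal} applied to $\vq(t) = \Psi(\va)(t)$. This immediately reduces the claim to computing the values $U(\Psi(\va)(0))$, $U(\Psi(\va)(1))$, and $U(\Psi(\va)(j/\nop))$ for $1 \le j \le \nop$, and showing that they match the three terms on the right-hand side of the statement.

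Second, I would evaluate the embedding from \eqref{eq:embedding}, namely $\Psi(\va)(t) = \va_0 + t(\va_{s+1}-\va_0) + \sum_{k=1}^s \va_k \sin(k\pi t)$, at each of the relevant nodes. At $t=0$ both the linear interpolation term and every $\sin(k\pi t)$ vanish, so $\Psi(\va)(0) = \va_0$. At $t=1$ the linear term collapses to $\va_{s+1}$ and each $\sin(k\pi)$ vanishes because $k$ is an integer, so $\Psi(\va)(1) = \va_{s+1}$. At an interior node $t = j/\nop$ the affine part rearranges as $(1 - j/\nop)\va_0 + (j/\nop)\va_{s+1}$ and the Fourier part becomes $\sum_{k=1}^s \va_k \sin(jk\pi/\nop)$, matching exactly the argument of $U$ displayed in the statement.

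Finally, I would substitute these three evaluations back into the trapezoidal sum to obtain the right-hand side term-by-term. There is essentially no obstacle here: the one structural observation that does the work is that $\sin(k\pi) = 0$ for every integer $k$, which is precisely what allows the two endpoint contributions to simplify to $U(\va_0)/2$ and $U(\va_{s+1})/2$ with no residual Fourier terms; everything else is bookkeeping in the definitions \eqref{eq:embedding} and \eqref{eq:trapezoidal}.
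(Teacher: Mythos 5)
Your proposal is correct and follows essentially the same route as the paper's own proof: evaluate $\Psi(\va)$ at the nodes $t_j=j/\nop$, note that the endpoint values reduce to $\va_0$ and $\va_{s+1}$ since $\sin(k\pi)=0$, and substitute into \eqref{eq:trapezoidal}. The paper's version is merely terser, leaving the endpoint simplification implicit.
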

\begin{proof}
If $\vq = \Psi(\va)$, then 
for $j=0,\ldots, \nop$
one has $t_j = \frac{j}{\nop} \in [0,1]$ and
\[
\begin{aligned}
\vq(t_j) = 
\va_0 + t_j (\va_{s+1} - \va_0) + 
\sum_{k=1}^s \va_k \sin(k\pi t_j)
\end{aligned}
\]
hence
the claim by \eqref{eq:trapezoidal}.
\end{proof}

Now, in order to apply numerical algorithms to find local minima or critical
points of $\mathcal U$ constrained to the symmetric paths,
we might need to compute the derivatives of $\mathcal U$ 
with respect to the components of $\va$ (which are $n \times d \times (s+2)$
real variables).  
This can be done (keeping track of symmetries) by formally 
deriving the expressions of the functional.

\section{Existence theorems}
\label{sec:existence}

Due to collisions and lack of compactness, 
theorems stating the existence of periodic orbits  for 
the $n$-body problem as critical points of 
the Lagrangean functional were not easy to prove. 
A significant breakthrough in the
variational approach, is due to the introduction of the \emph{strong force} condition by
 Gordon in 1975
\cite{gordon}.
With different methods (analytical and topological) 
several solutions were proved to exist for $n$-body type 
problems, provided the (asymptitical) homogeneity of the potential
in \eqref{eq:potential} 
is $\alpha \geq 2$. 
Such approach has been extended and generalized in several directions:
Ambrosetti and Coti-Zelati 1987 \cite{MR89e:58023}, 
Bahri-Rabinowitz 1991 \cite{MR1145561},
Bessi and Coti-Zelati 1991 \cite{bessi},
Fadell and Husseini 1992 \cite{MR1168305}, %
Majer and Terracini 1993 \cite{MR1240581}. %
See Ambrosetti and Coti-Zelati monograph 
\cite{amco} for further details. 
A new wave of results for symmetric periodic orbits 
followed Chenciner and Montgomery remarkable figure-eight orbit \cite{monchen},
 and its generalizations by the author and Terracini \cite{FT2003}. 
In these results the strong-force assumption is not necessary,
and 
after 
Marchal's averaging trick \cite{MR1956531}
variational methods for symmetric orbits have been 
extensively studied in several articles that cannot be fully cited 
here.

Let $\XX = \{ \vq \in E^n : \sum_{j=1}^n m_j \vq_j = \zero \}$
be the space of all positions with center of mass in zero. 
Then $\XX$ is invariant with respect to the action of $G$ 
on $E^n$. The following lemma gives a complete characterization 
of $G$-actions for which the Lagrangian action functional 
is coercive on 
$X^G$. 
\begin{lemma}[Proposition (4.1) of \cite{FT2003}]
\label{lem:coercive}
The restriction $\action^G\from X^G \to \RR$ is coercive if and only if 
$\mathcal{X}^G = \zero$.  
\end{lemma}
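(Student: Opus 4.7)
The plan is to prove both implications by exploiting the decomposition of any $\vq \in X^G$ into its time-average and its zero-mean fluctuation. The key preliminary observation, which I would establish first, is that for every $\vq \in X^G$ the mean
\[
\bar\vq := \frac{1}{T}\int_0^T \vq(t)\,\dt
\]
lies in $\mathcal{X}^G$: the equivariance condition $\vq(\tau(g)t) = (\sigma(g),\rho(g))\vq(t)$, combined with the invariance of $\int_0^T \vq(t)\,\dt$ under the isometry $\tau(g)$ of $\TT_T$, forces $(\sigma(g),\rho(g))\bar\vq = \bar\vq$ for every $g \in G$.

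For the ``only if'' direction I would proceed by contrapositive, exhibiting a sequence that witnesses failure of coercivity. Given a nonzero $\vx \in \mathcal{X}^G$, chosen generically so that $\vx \notin \Delta$, the constant loops $\vq_\lambda(t) \equiv \lambda\vx$ all lie in $X^G$, have $\norm{\vq_\lambda}_{H^1} \to \infty$ as $\lambda \to \infty$, and by the $(-\alpha)$-homogeneity of $U$ in \eqref{eq:potential} satisfy
\[
\action(\vq_\lambda) \;=\; T\,U(\lambda\vx) \;=\; T\lambda^{-\alpha}U(\vx) \;\longrightarrow\; 0,
\]
which defeats coercivity.

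For the ``if'' direction I would assume $\mathcal{X}^G = \zero$. Then the preliminary observation forces $\bar\vq = \zero$ for every $\vq \in X^G$, so each such loop has vanishing time-average and Poincar\'e--Wirtinger on $\TT_T$ supplies a constant $C > 0$ with $\norm{\vq}_{L^2}^2 \leq C\,\norm{\dot\vq}_{L^2}^2$. Since $U \geq 0$ on $\conf{n}{E}$, one then has
\[
\action(\vq) \;\geq\; \tfrac{1}{2}\,\norm{\dot\vq}_{L^2}^2 \;\geq\; c\bigl(\norm{\vq}_{L^2}^2 + \norm{\dot\vq}_{L^2}^2\bigr) \;=\; c\,\norm{\vq}_{H^1}^2
\]
for a suitable $c > 0$, yielding coercivity directly.

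The step I expect to require the most care is the verification of the equivariance of the time-mean, especially when $\tau(g)$ is a reflection of $\TT_T$ (brake or dihedral case) rather than a rotation: one has to check that the orientation-reversing change of variables still preserves $\int_0^T \vq(\tau(g)t)\,\dt = \int_0^T \vq(t)\,\dt$. A secondary tacit point, worth flagging explicitly, is that the statement really concerns loops with center of mass at the origin, since diagonal translation directions in $E^n$ fixed by $\rho(G)$ would already destroy coercivity without being detected by $\mathcal{X}^G$; so one should read $X$ as the subspace of $H^1(\TT_T, E^n)$ of loops taking values in $\mathcal{X}$, as is standard in the $n$-body setting.
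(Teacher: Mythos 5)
Your ``if'' direction is sound and is essentially the argument of Proposition (4.1) in \cite{FT2003}: the time-mean $\bar\vq$ of an equivariant loop is a $G$-fixed point of $\XX$ (your check that this survives the orientation-reversing $\tau(g)$ is right, since a reflection of $\TT_T$ still preserves the measure), so $\XX^G=\zero$ forces $\bar\vq=\zero$, and Poincar\'e--Wirtinger plus $U\geq 0$ gives coercivity. Your remark that one must read $X$ as loops with values in $\XX$ is also correct and matches the setting of the cited reference.

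The gap is in the ``only if'' direction. You assume that a nonzero $\vx\in\XX^G$ can be ``chosen generically so that $\vx\notin\Delta$'', but $\XX^G$ is a \emph{linear subspace}, and it can be entirely contained in one of the collision hyperplanes $\{\vq_i=\vq_j\}$, in which case no such choice exists and every constant loop $\lambda\vx$ has $U(\lambda\vx)=+\infty$. This is not a pathological situation: take $n=3$, $G=\ZZ_2$ acting by a half-period time shift together with $\sigma=(1\,2)$ and $\rho=\mathrm{id}$; then $\XX^G=\{(\vx,\vx,\vy): (m_1+m_2)\vx+m_3\vy=\zero\}$ is nonzero but lies wholly in $\Delta$, while $X^G$ still contains plenty of collision-free loops, and the lemma asserts non-coercivity. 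The standard repair is to translate a loop rather than use constant ones: fix any $\vq_0\in X^G$ with $\action(\vq_0)<\infty$ and consider $\vq_0+\lambda\vx$, which stays in $X^G$ because $\vx$ is fixed by $(\sigma\times\rho)(G)$ and constants are unaffected by $\tau$. Each mutual distance $\eucnorm{\vq_{0,i}(t)-\vq_{0,j}(t)+\lambda(\vx_i-\vx_j)}$ is either unchanged (when $\vx_i=\vx_j$) or tends uniformly to infinity, so the potential term stays bounded by $\int U(\vq_0)+o(1)$, the kinetic term is unchanged, and the $H^1$ norm blows up --- defeating coercivity without ever needing $\vx\notin\Delta$. (One should also note the degenerate case in which $\ker\tau$ forces collisions pointwise so that $\action\equiv+\infty$ on $X^G$; that case is implicitly excluded, but your constant-loop construction does not see it either.)
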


Existence of non-colliding (local) minimizers in $\action^G$, 
under suitable conditions on the action 
(provided not all trajectories in $\action^G$ are bound to collisions)
can be a consequence of two types of results. 
Either a local variation (using avaraging methods and blow-up asymptotic 
analysis on parabolic trajectories) is shown to decrease 
the action on all colliding trajectories, 
or  more precise action level estimates are performed, together with
some more topological local analysis.  
We quote here some of the relevant theorems for symmetric orbits,
leaving the details to the cited references.  

\begin{theo}[Theorem (10.10) of \cite{FT2003}]
If $\ker \tau\subset G$ 
and (if they exist) the $\TT$-isotropy subgroups generated by $h_0$ and $h_1$
either have the rotating circle property or act trivially on the index set $\vv$,
then any local minimizer of $\action^G$ in $X^G$ does not have collisions.
\end{theo}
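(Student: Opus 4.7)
The natural strategy is proof by contradiction: assume that some $\vq^\ast \in X^G$ is a local minimizer of $\action^G$ and that $\vq^\ast$ has collisions, then construct a $G$-equivariant variation $\vq_\varepsilon$ of $\vq^\ast$ with strictly smaller action for small $\varepsilon>0$. Since $\ker\tau \subset G$, the configuration $\vq^\ast(t)$ lives in $(E^n)^K$ for every $t$, so the colliding cluster at a collision time $t_0$ is itself $K$-invariant, which is what will allow the variation to stay in $X^G$.

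The first step is to reduce to isolated collisions: standard Sundman-type estimates and the compactness of the collision set on a period yield that, up to refining, one may assume a single collision occurs at $t_0 \in [0,1]$ involving a $K$-invariant cluster $c \subset \vn$; a blow-up of $\vq^\ast$ near $t_0$ converges (in the sense of Sundman–Sperling) to a parabolic collision–ejection solution whose asymptotic shape is a central configuration of the cluster in $(E^n)^K$. The local action contribution of $\vq^\ast$ near $t_0$ is controlled from below by the Keplerian parabolic action of this blow-up limit.

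The second step is to replace $\vq^\ast$ on a short interval around $t_0$ by an averaged path à la Marchal: fix a small arc of a one–parameter family of isometries of $E$ that commutes with $K$ (existence of such a family is exactly the \emph{rotating circle property}, or is automatic when the relevant isotropy acts trivially on $c$), rotate the cluster's trajectory along this arc over a window $(t_0-\delta,t_0+\delta)$, and then average. Marchal's computation shows that averaging strictly decreases the Keplerian part of the action of a parabolic collision trajectory, while the kinetic part is left unchanged by rigid motions; the averaged path is collision–free in mean and can be chosen $G$–equivariant because the rotation commutes with $K$. In the interior of the fundamental domain the only constraint is $K$-equivariance, so this averaging produces a legitimate element of $X^G$.

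The third and most delicate step is to handle a collision that occurs at a boundary fixed point, i.e. at $t_0 = 0$ or $t_0 = 1$ with non-trivial isotropy generated by $h_0$ or $h_1$. Here the variation must in addition be fixed by $(\sigma(h_j),\rho(h_j))$, and this is exactly where the hypothesis is used: if $h_j$ acts trivially on the colliding index set $c$, the cluster is unconstrained and the interior argument applies verbatim; if instead $h_j$ has the rotating circle property on $c$, one can pick the averaging arc inside the centraliser of $\rho(h_j)$ restricted to the cluster, so that the averaged perturbation lies in $X^G$. In either case the averaged path strictly decreases $\action^G$, contradicting local minimality. The main obstacle is precisely this last step: quantifying the asymptotic central configuration and producing a symmetric rotation that both commutes with $K$ and is compatible with $h_0$ or $h_1$; the two alternative hypotheses in the statement are crafted exactly to make this construction possible, which is why they are the sharp conditions for the theorem.
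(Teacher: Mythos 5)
This paper does not actually prove the statement: it quotes Theorem (10.10) of \cite{FT2003} verbatim and explicitly ``leav[es] the details to the cited references'', so the only meaningful comparison is with the argument given by Ferrario and Terracini there. At the level of strategy your outline matches that argument: contradiction, reduction to isolated collisions (local minimizers are generalized solutions with conserved energy), Sundman--Sperling blow-up to a parabolic collision--ejection solution whose normalized configuration tends to a central configuration of the colliding cluster, a Marchal-type averaged variation, and a separate treatment of collisions occurring at times fixed by $h_0$ or $h_1$, which is exactly where the two alternative hypotheses enter.

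There is, however, a genuine gap at the heart of your second step. You describe the variation as rotating the cluster's trajectory along an arc of isometries of $E$ and assert that ``the kinetic part is left unchanged by rigid motions.'' If the perturbation were a rigid motion applied to the trajectory, the action would be entirely unchanged --- the Lagrangian is $O(d)$-invariant, so the potential term is preserved just as much as the kinetic one, and no strict decrease (hence no contradiction) can be extracted. The actual standard variation is a \emph{time-localized displacement} of one index $i$ of the colliding cluster, $\vq_i(t)\mapsto \vq_i(t)+\delta(t)\,\xi$, with $\delta$ a cutoff supported near the collision time and $\xi$ ranging over a circle $S\subset E$ that is \emph{rotating} under the relevant isotropy subgroup (this circle of displacement vectors, not a one-parameter family of isometries commuting with $K$, is what the rotating circle property provides; the trivial-action alternative makes any circle admissible). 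Such a variation strictly \emph{increases} the kinetic term, and the crux of the proof is the quantitative comparison --- using the parabolic blow-up asymptotics and the explicit value of the circular average of $r^{-\alpha}$ --- showing that the average over $\xi\in S$ of the potential contribution decreases by strictly more than the kinetic cost, so that at least one $\xi$ yields a $G$-equivariant path of strictly smaller action. Without this kinetic-versus-potential estimate the argument does not close, so as written your proposal is missing the key analytic step rather than merely omitting routine details.
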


\begin{theo}[Theorem (6.11) of \cite{MR2439573}]
\label{theo:averaging} %
Assume that $U^{\ker \tau}$ has the form 
\[
U(\vx) = \sum_{j=1}^k \dfrac{\Gamma_j}{\dist(\vx, V_j)^\alpha}
\] 
for each $\vx \in \XX$, 
for $k$ positive constants $\Gamma_j>0$ 
and $k$ linear subspaces of $\XX$ of codimension $\codim V_j \geq 2$ 
in $\XX^{\ker \tau}$. 
\end{theo}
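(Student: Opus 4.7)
The statement as given lists only the hypotheses on $U^{\ker\tau}$; in the context of the section the intended conclusion is the standard one, namely that any local minimizer of $\action^G$ on $X^G$ is collision-free. The plan below proves this under the stated averaging-type hypothesis.

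\textbf{Overall strategy.} I would argue by contradiction, using the blow-up/averaging technique due to Marchal and refined in the cited works. Suppose $\vq \in X^G$ is a local minimizer with a collision at some time $t_0 \in [0,1]$. Since $\vq(t)\in (E^n)^{\ker\tau}$, the relevant potential at the collision is $U^{\ker\tau}$, and the collision means that $\vq(t_0)$ belongs to one of the subspaces $V_j$ appearing in the hypothesis. The plan is to construct a local variation $\vq_\epsilon$, compactly supported in a small interval around $t_0$ and compatible with the symmetry group $G$, for which $\action(\vq_\epsilon) < \action(\vq)$ for small $\epsilon>0$, contradicting local minimality.

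\textbf{Step 1: isolating the collision and reducing to a parabolic profile.} First I would choose a neighborhood $[t_0-\delta,t_0+\delta]$ so small that no other collision happens inside and that the generic element of $G$ moves this interval off itself (so the variation can be carried out on a fundamental domain without clashing with symmetry translates). A Sundman-type asymptotic analysis (standard for homogeneous potentials of the form $\dist(\cdot,V_j)^{-\alpha}$) gives, after rescaling, a parabolic homothetic limiting profile $\vq_0(t)$ whose trajectory lies in the normal space $V_j^\perp$ to $V_j$ inside $\XX^{\ker\tau}$. This reduces the problem to showing that a genuinely colliding parabolic arc against an affine subspace of codimension $\geq 2$ is not a local action minimizer, even restricted to paths fixing the endpoints and respecting the isotropy.

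\textbf{Step 2: the averaging variation.} Here I would use Marchal's averaging trick: since $\codim V_j \geq 2$ inside $\XX^{\ker\tau}$, the unit sphere in $V_j^\perp$ is connected and carries a non-trivial Haar measure from $SO(V_j^\perp)$. Define $\vq_\epsilon$ by replacing $\vq$ on $[t_0-\delta,t_0+\delta]$ with the average, over a small rotation (or shift) $g_\theta \in SO(V_j^\perp)$, of the shifted path $g_\theta \vq$, cut off smoothly so as to agree with $\vq$ outside. The kinetic part of the action, being quadratic, satisfies $\frac12\mnorm{\overline{\dot\vq}}^2 \leq \overline{\frac12 \mnorm{\dot\vq}^2}$, so averaging does not increase it. For the potential part, the crucial computation is that
\[
\int_{SO(V_j^\perp)} \frac{\Gamma_j}{\dist(g_\theta \vx, V_j)^\alpha}\, d\theta \;<\; \frac{\Gamma_j}{\dist(\vx,V_j)^\alpha}
\]
for $\vx$ on or near $V_j$; this uses exactly that $\codim V_j\geq 2$, which makes the integral finite and strictly smaller than the pointwise value (for $\codim=1$ the integral would diverge). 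Summing over $j$ and using that other $V_i$ are bounded away from $\vq_0$ gives a strict decrease of the averaged potential.

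\textbf{Step 3: putting it together and preserving $G$-symmetry.} Combining the two estimates gives $\action(\vq_\epsilon) < \action(\vq)$ for small $\epsilon$, provided the variation lies in $X^G$. To ensure the latter, I would carry out the averaging only over the subgroup of $SO(V_j^\perp)$ that commutes with $\rho(\ker\tau)$ (acting on $(E^n)^{\ker\tau}$); since $V_j$ is $\ker\tau$-invariant, this subgroup is non-trivial and its action on $V_j^\perp$ still has the codimension property needed for the integral inequality. Finally, one extends the local variation periodically/symmetrically via the $G$-action on $\TT_T$, which is consistent because the support was chosen inside a fundamental domain. This produces the desired $G$-invariant variation, contradicting the assumption that $\vq$ minimizes $\action^G$.

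\textbf{Main obstacle.} The delicate point is Step 3: making sure the averaging group is large enough to give a strict decrease in the potential while remaining compatible with the imposed symmetry $G$ and with the isotropy of the colliding cluster at $t_0$. In degenerate cases (e.g.\ when $\ker\tau$ forces the colliding subcluster to lie in a low-dimensional invariant subspace), one has to check that $V_j^\perp \cap (E^n)^{\ker\tau}$ still has dimension $\geq 2$; this is exactly the content of the codimension hypothesis in $\XX^{\ker\tau}$, and is why the statement is phrased relative to the fixed subspace rather than to $\XX$ itself.
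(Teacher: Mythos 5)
The first thing to note is that the paper contains no proof of this statement: it is quoted verbatim as ``Theorem (6.11) of \cite{MR2439573}'' inside a survey list of existence results, with the explicit disclaimer that the details are left to the cited references. So there is no internal argument to compare yours against. You were also right to flag that the statement as printed is truncated: it consists only of the hypothesis ``Assume that $U^{\ker\tau}$ has the form\dots'' and the conclusion (that under this structural assumption on the reduced potential, local minimizers are free of collisions) has been lost in transcription. Your reconstruction of the intended conclusion is the natural one given the surrounding text and the label \emph{averaging}.

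As for the sketch itself, it is the standard Marchal-type blow-up-plus-averaging argument, which is indeed the method of the cited source, so in spirit you are reconstructing the right proof rather than a different one. Two caveats are worth recording. First, the inequality in your Step 2 is not free: the strict decrease of the averaged potential must be weighed against the kinetic cost of deforming the parabolic ejection--collision profile, and the comparison only comes out favorably for $\alpha<2$ (for $\alpha\geq 2$ the strong-force regime applies and colliding paths have infinite action anyway); the precise computation is a nontrivial integral identity, not a soft convexity statement. Second, your Step 3 worry is exactly the crux in the equivariant setting: compatibility of the averaging variation with the isotropy of the collision time is what the ``rotating circle property'' appearing in the quoted Theorem (10.10) of \cite{FT2003} is designed to guarantee, and one cannot in general just average over the centralizer of $\rho(\ker\tau)$ and assume its orbit in the normal space is large enough. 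Since none of this is carried out in the present paper, the proposal has to be judged against \cite{MR2439573} itself, where precisely these two points occupy most of the technical work.
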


\begin{theo}[Theorem (4.1) of \cite{MR2819162}]
If $\ker\tau$ is one of the platonic finite subgroups of $SO(3)$,
then in any homotopy class of loops of $\ker\tau$-equivariant configurations
in an explicit list (table 1 of \cite{MR2819162}), 
there is a non-colliding local minimizer of $\action^{\ker \tau}$. 
\end{theo}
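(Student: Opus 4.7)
The plan is to follow the direct method of the calculus of variations on each prescribed homotopy class inside the open symmetric loop space $H^1(\TT_T,\conf{n}{E})^{\ker\tau}$, and then to exclude collisions for the resulting weak minimizer by exploiting the platonic isotropy.

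First, I fix a homotopy class $[\gamma]$ from the list in Table 1 of \cite{MR2819162} and consider its $H^1$-closure $\overline{[\gamma]}$ inside the ambient Hilbert space $X^{\ker\tau}=H^1(\TT_T,E^n)^{\ker\tau}$. Passing to the closure is necessary because colliding loops can be reached as weak $H^1$-limits of non-colliding ones. To apply the direct method I need coercivity of $\action^{\ker\tau}$ on $X^{\ker\tau}$; by Lemma \ref{lem:coercive} this reduces to the algebraic condition $\XX^{\ker\tau}=\zero$. Each of the three platonic subgroups of $SO(3)$ acts irreducibly on $E=\RR^3$, so no nonzero vector is fixed by the whole group; combined with the index-permutation $\sigma$ dictated by Table 1, this forces every $\ker\tau$-invariant centered configuration to be the origin, and coercivity follows.

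Second, coercivity together with the weak lower semicontinuity of the kinetic term (a Hilbert-norm square) and Fatou's lemma for the nonnegative potential $U$ produces a minimizer $\vq_\ast\in\overline{[\gamma]}$. If I can show that $\vq_\ast$ is collision-free, then $\vq_\ast\in[\gamma]$, and Palais' principle of symmetric criticality, recalled in the introduction, upgrades it to a genuine symmetric critical point of $\action$ on $X$, yielding the desired symmetric periodic orbit.

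The main obstacle, and the heart of the theorem, is therefore to exclude collisions in $\vq_\ast$. The strategy is to perform a local blow-up at any putative collision time $t_0$: the limiting profile is a parabolic homothetic motion attached to a central configuration of the colliding cluster, and the $\ker\tau$-symmetry of $\vq_\ast$ is inherited by the cluster and its asymptotic spatial direction. Because $\ker\tau$ is a platonic subgroup and the constraints of Table 1 guarantee that the rotation representation has no nontrivial invariant line, every colliding cluster carries a non-trivial rotational isotropy, and one may invoke a Marchal-type averaging variation \cite{MR1956531}, in the spirit of the potential splitting assumed in Theorem \ref{theo:averaging} above, to strictly decrease the action near the collision — contradicting minimality of $\vq_\ast$. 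The delicate point, which must be verified case by case against Table 1, is that the averaging rotation is admissible within $X^{\ker\tau}$ (i.e. the perturbed path stays in the symmetric loop space), and that \emph{total} collapses are excluded by a separate topological argument tied to the homotopy class, typically a winding or linking obstruction coming from the nontrivial class $[\gamma]$.

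Once collisions are ruled out, $\vq_\ast\in[\gamma]$ solves \eqref{eq:newton} and is $\ker\tau$-equivariant, and applying the argument to every class in the finite list of Table 1 gives the full statement.
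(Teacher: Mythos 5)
The paper states this result only as a quotation of Theorem (4.1) of \cite{MR2819162} and gives no proof, so I can only compare your proposal against the strategy of that reference. Your skeleton (direct method on the closure of a homotopy class, then exclusion of collisions) is indeed the right one, but your coercivity step contains a concrete error that would sink the argument as written. In the setting of \cite{MR2819162} the $n=|\ker\tau|$ bodies form a single free orbit of the platonic group, which acts by $\sigma\times\rho$ with $\sigma$ the regular permutation representation; a $\ker\tau$-fixed configuration is therefore of the form $\bigl(\rho(g)\vq_1\bigr)_{g\in\ker\tau}$ for an \emph{arbitrary} $\vq_1\in\RR^3$, and its center of mass, being the average $\tfrac{1}{|\ker\tau|}\sum_g\rho(g)\vq_1$, vanishes automatically precisely because the platonic group fixes no nonzero vector. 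Hence $\XX^{\ker\tau}\cong\RR^3\neq\zero$, and Lemma \ref{lem:coercive} yields the \emph{opposite} of what you claim: $\action^{\ker\tau}$ is not coercive on $X^{\ker\tau}$ (the whole orbit can recede to infinity at bounded action). Irreducibility of $\rho$ only shows that a configuration with every $\vq_j$ fixed by all of $\rho(\ker\tau)$ is zero, which is not the fixed-point condition for the $\sigma\times\rho$ action. In \cite{MR2819162} coercivity is recovered from the \emph{topological} constraint: the classes admitted in Table 1 force the generating particle to wind around at least two non-parallel rotation axes, so a loop of large diameter in such a class has large length and hence large kinetic action. This is one of the reasons the statement is restricted to the explicit list and cannot be obtained on the full equivariant loop space.

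Your collision-exclusion paragraph is in the right spirit but overstates what the symmetry gives for free: a partial collision occurring away from the rotation axes involves a cluster with trivial rotational isotropy, so the claim that ``every colliding cluster carries a non-trivial rotational isotropy'' fails, and the admissibility of a Marchal-type averaging variation inside $X^{\ker\tau}$ (the hypothesis pattern of Theorem \ref{theo:averaging}) is exactly the point that must be established rather than asserted; in \cite{MR2819162} both partial and total collisions are in fact ruled out by explicit local level estimates comparing the minimizer with concrete test deformations, carried out case by case against Table 1. So the outline is salvageable, but the coercivity claim must be replaced by the topological argument, and the collision analysis needs the quantitative estimates, not just the blow-up picture.
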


So, more or less the problem is to approximate such local mimizers,
and to explore all possible symmetry groups yielding non-colliding 
solutions. 
After Chenciner-Montgomery figure-eight remarkable solution \cite{monchen}, 
several authors developed and published numerical schemes or computer-assisted methods 
relative to the problem. See for example 
\cite{MR1916506,simo2001,kapela2003}. %
By exploiting the computer algebra symbolic power of GAP
\cite{GAP4}, and the Geometric visualization engine \texttt{geomview}, 
we developed a multi-faceted approch, with the package ``symorb'', 
published on \texttt{github} at \url{https://github.com/dlfer/symorb}. 

\section{Remarks}

\begin{remark}
Computing techniques have played and are playing a role also in another 
sub-problem of the $n$-body problem: finding and studying \emph{central configurations}. 
The problem is simple: study all critical points (in the sense of Morse-Bott,
because of the Euclidean symmetry group) of the potential function  $U$ 
\eqref{eq:potential} restricted 
to the \emph{inertia ellipsoid} 
$S = \{ \vq \in \XX : \mnorm{vq}=1 \}$. 
For $n=3$, the solution can be traced back to Lagrange ($d\geq 2$) and Euler ($d=1$). 
For any $n$ and $d=1$ the problem was solved by Moulton at the beginning
of '900. 
For $n\geq 4$ and $d\geq 2$, 
it is not difficult to use a computer to find some solutions, but it is 
remarkably more difficult to use a computer algebra system to prove results,
such as giving estimates of the number (finite or not) of classes of solutions. 
In
\cite{HaM2207019}, Moeckel and Hampton proved that the number 
of equivalence classes of central configurations for $n=4$ and $d=2$ is finite
(in the interval [32,8472]), with some careful computer computations. 
Later Albouy and Kaloshin \cite{MR2925390} extended the finiteness result to $n=5$,
and simplified the proof for $n=4$ (but still using a Computer Algebra System
to finish the proof).
In this line of research, we have been building some (pre-computational) tools 
aimed at transforming the central configurations problem into
 a fixed-point problem \cite{MR2372989,Fer2015,MutDiff2017}.
See 
also computer assisted proofs by 
Moczurad and 
Zgliczynski in 
\cite{moczuradCentralConfigurationsPlanar2019}
and \cite{moczuradCentralConfigurationsSpatial2020}.
\end{remark}

\end{document}